\documentclass[a4paper,11pt,reqno]{amsart}
\usepackage{amsthm}
\usepackage{amsmath}
\usepackage{enumerate}
\usepackage{amsfonts}
\usepackage{amssymb}
\usepackage{fullpage}
\usepackage{amsmath,amscd}
\usepackage{stmaryrd}
\usepackage{graphicx}
\usepackage{array}
\usepackage{amsmath,amssymb,amsfonts,dsfont}
\usepackage[utf8]{inputenc} 
\usepackage[english]{babel}
\usepackage[T1]{fontenc} 
\usepackage{graphicx}
\usepackage{float}
\usepackage{pdfpages}
\usepackage[a4paper, margin = 3cm, bottom = 3cm]{geometry}
\usepackage{ifpdf}
\usepackage{marginnote}
\usepackage{hyperref}
\usepackage{tikz}
\hypersetup{pdfborder=0 0 0, 
	    colorlinks=true,
	    citecolor=black,
	    linkcolor=blue,
	    urlcolor=red,
	    pdfauthor={Guillaume Tahar}
	   }
\newtheorem{thm}{Theorem}[section]

\newtheorem{prop}[thm]{Proposition}
\newtheorem{lem}[thm]{Lemma}
\theoremstyle{definition}
\newtheorem{defn}[thm]{Definition}
\theoremstyle{remark}
\newtheorem{rem}[thm]{Remark}
\theoremstyle{definition}

\theoremstyle{definition}

\theoremstyle{definition}

\numberwithin{equation}{section} 
\pagestyle{plain} 
\title{A topological bound on the Cantor-Bendixson rank of meromorphic differentials}
\author{Guillaume Tahar} 
\address[Guillaume Tahar]{Faculty of Mathematics and Computer Science, Weizmann Institute of Science,
Rehovot, 7610001, Israel}
\email{tahar.guillaume@weizmann.ac.il}

\date{July 23, 2020}
\keywords{Translation surface, Saddle connection, Invariant component, Cantor-Bendixson rank}
\begin{document}
\begin{abstract}
In translation surfaces of finite area (corresponding to holomorphic differentials), directions of saddle connections are dense in the unit circle. On the contrary, saddle connections are fewer in translation surfaces with poles (corresponding to meromorphic differentials). The Cantor-Bendixson rank of their set of directions is a measure of descriptive set-theoretic complexity. Drawing on a previous work of David Aulicino, we prove a sharp upper bound that depends only on the genus of the underlying topological surface. The proof uses a new geometric lemma stating that in a sequence of three nested invariant subsurfaces the genus of the third one is always bigger than the genus of the first one.
\end{abstract}
\maketitle
\setcounter{tocdepth}{1}
\tableofcontents

\section{Introduction}

In \cite{BS}, Bridgeland and Smith give a connection between spaces of stability conditions on triangulated categories and moduli spaces of quadratic meromorphic differentials. Slopes of stable objects in triangulated categories correspond to directions of saddle connections (geodesic segments between singularities) for the flat structure defined by a meromorphic differential on a Riemann surface. They appear in counting BPS states of string theory, see \cite{KNTZ}.\newline
The set of directions of saddle connections for a meromorphic quadratic differential crucially depends on the order of the singularities. It is well known (see \cite{Zo}) that when poles are at most simple, the flat surface is of finite area and the directions of saddle connections are dense in the unit circle. On the contrary, when there is a pole of order at least two, then the flat surface is of infinite area and the set of directions of saddle connections is closed, see \cite{Au,Ta}.\newline
In order to quantify the complexity of the set of directions of saddle connections, Aulicino introduces in \cite{Au} the Cantor-Bendixson rank of a meromorphic quadratic differential that is the smallest number of times the set of directions has to be derived to stabilize (see Definition 2.1). The moduli space is stratified by fixing the orders of the singularities. For differentials that belong to a given stratum, the rank is bounded by the dimension of the stratum.\newline
Actually, the framework used by Aulicino is not exactly that of meromorphic differentials but rather holomorphic differentials with a slit. He considers only saddle connections that do not cross the slit. Up to easy surgeries, it is always possible to associate to a meromorphic differential a holomorphic differential with slits in such a way that the set of directions of saddle connections is the same, see \cite{Au}.
In this paper, we provide a sharp bound on the Cantor-Bendixson rank. The bound provided in \cite{Au} was the dimension $2g+n-1$ of a stratum $\mathcal{H}(a_1,\dots,a_n)$ of holomorphic differentials with $n$ zeroes and slits on a surface of genus $g$. Instead, we prove that the maximal rank is $2g$, $2g+1$ or $2g+2$ depending on the number of zeroes and poles. It is a significant improvement in particular in principal strata (where the number $n$ of zeroes is maximal).\newline
In addition to the counting of BPS states in some supersymmetric field theories, the Cantor-Bendixson rank of a meromorphic differential is involved in the counting of saddle connections. For a translation surface $X$, let $N(L,X)$ be the number of saddle connections of length smaller than $L$. The classical result of Masur (see \cite{Ma}) state in particular that $N(L,X) \sim L^{2}$ when $X$ is defined by a holomorphic $1$-form. Following an unpublished conjecture of Aulicino, Pan and Su, when $X$ is defined by a meromorphic $1$-form (with at least one pole) whose Cantor-Bendixson rank is $k_{X}$, then we have:\newline
$N(L,X) \sim L^{0}$ if $k_{X}=1$\newline
$N(L,X) \sim L(log~L)^{k_{X}-2}$ if $k_{X} \geq 2$.\newline
An optimal estimate of the Cantor-Bendixson rank of meromorphic differentials thus takes a greater importance.\newline

\section{Statement of main results}

In the following, we denote by \textit{translation surface with pole} a pair $(X,\phi)$ where $X$ is a compact Riemann Surface of genus $g$ and $\phi$ is a meromorphic $1$-form whose singularities are of order $a_1,\dots,a_n,-b_1,\dots,-b_p$. Such pairs belong to strata $\mathcal{H}(a_1,\dots,a_n,-b_1,\dots,-b_p)$ where $\sum \limits_{i=1}^n a_i - \sum \limits_{j=1}^p b_j = 2g-2$. We assume the the meromorphic differential has at least one zero and one pole.\newline

The meromorphic differential defines a flat metric on the surface punctured at the poles. The zeroes of the differential are the conical singularities of the metric, see Section 3 for details. Saddle connections are geodesic segments whose ends are conical singularities. We study the set $\Theta(X,\phi) \subset \mathcal{S}^{1}$ of directions of saddle connections where $\mathcal{S}^{1}$ is the unit circle. When there is no ambiguity, we simply denote this set by $\Theta$.

We introduce the definition of Cantor-Bendixson rank used by Aulicino in \cite{Au}.

\begin{defn}
Let $A \subset \mathcal{S}^{1}$. The derived set $A^{\star}$ is the subset of $A$ such that all isolated points of $A$ are removed. We denote by $A^{\star n}$ the $n^{th}$ derived set of $A$.\newline
The Cantor-Bendixson rank of a set $A$ is the smallest non-negative integer $n$ such that $A^{\star n+1} = A^{\star n}$.\newline
The Cantor-Bendixson rank of a translation surface with poles is the Cantor-Bendixson rank of its set of directions of saddle connections $\Theta(X,\phi)$.
\end{defn}

The main result of the paper is a topological upper bound on the Cantor-Bendixson rank of the set of directions of saddle connections for a meromorphic differential of given genus. The equality case is realized by some surfaces constructed in \cite{Au}. Our main tool in proving Theorem 2.2 is Lemma 4.3 which states that in a sequence of three nested invariant subsurfaces, the genus always increases. This technical lemma can be used for general translation surfaces (and not only meromorphic differentials).

\begin{thm}
For a meromorphic differential of genus $g$, the Cantor-Bendixson rank of the set of directions of saddle connections is at most $2g+2$.\newline
If the differential has at most one zero or at most one pole, the bound can be improved to $2g+1$. Furthermore, if the differential has exactly one zero and one pole, the bound can be improved to $2g$.
\end{thm}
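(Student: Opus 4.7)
The plan is to translate the operation of taking the derived set $\Theta^{\star}$ into a geometric operation on invariant subsurfaces, and then bound the resulting chain length using Lemma 4.3. The heuristic is that an isolated direction $\theta \in \Theta^{\star k}$ must come from a saddle connection $\gamma$ whose direction is approached by shorter and shorter saddle connections of lower derivation depth; the natural object carrying this accumulation is an invariant subsurface of depth $k$. Following the set-up of \cite{Au}, I would first establish that to every isolated direction at depth $k$ one can canonically associate an invariant subsurface $X_k$, and that as $k$ varies, these subsurfaces form a strictly nested chain $X_0 \subsetneq X_1 \subsetneq \dots \subsetneq X_r$ where $r$ is the Cantor-Bendixson rank.

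Once such a chain is available, Lemma 4.3 does the heavy lifting: for every $i$, $g(X_{i+2}) > g(X_i)$. Iterating this strict inequality gives $g(X_{2j}) \geq g(X_0) + j$ and $g(X_{2j+1}) \geq g(X_1) + j$, so a chain of length $r$ forces the genus of the ambient surface to be at least roughly $r/2$. Bounding $g(X_r) \leq g$ then yields $r \leq 2g + c$ for a small constant $c$ coming from the boundary data of the filtration. In the generic case one has $c = 2$ (one unit of slack at each end of the chain, corresponding to possible degenerate first and last layers), which gives the announced bound $2g+2$.

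To obtain the refined bounds $2g+1$ and $2g$, I would analyze the endpoints of the filtration. If the differential has a single zero, the deepest possible layer of the filtration is constrained: there is no room for an accumulation at the center of the surface that is not already detected at depth $r-1$, which removes one unit of slack. A symmetric argument for the presence of a single pole constrains the outermost layer of the filtration (the behaviour near infinity of the metric), removing another unit of slack. When both conditions are met simultaneously, both endpoint arguments apply and yield $r \leq 2g$.

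The main obstacle will be the first step: rigorously proving that the Cantor-Bendixson filtration of $\Theta$ is induced by a strictly nested chain of invariant subsurfaces satisfying the hypotheses of Lemma 4.3. Concretely, one must show that if $\theta$ is isolated in $\Theta^{\star k}$ but not in $\Theta^{\star k-1}$, then the associated saddle connection cannot lie inside any invariant subsurface already produced at lower depth, so that strict inclusion $X_{k-1} \subsetneq X_k$ is forced. This is a local-to-global statement that requires controlling how accumulation of saddle connection directions interacts with the geometric decomposition of the surface in the chosen direction, and is where the bulk of the technical work of the paper is likely to lie; the combinatorial bound on the chain length is then a direct consequence of Lemma 4.3.
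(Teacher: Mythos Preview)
Your core strategy matches the paper's: associate invariant subsurfaces to directions at each derivation depth, nest them via $\omega$-limits, and invoke Lemma 4.3 so that the genus climbs at least once every two steps. The paper packages this as Proposition 4.4: every direction in $\Theta^{\star(k-1)}$ is carried by an invariant subsurface of genus at least $(k-2)/2$, with an extra boundary-component clause in the equality case. Bounding that genus by $g$ yields the $2g+2$ bound immediately.

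Two points where your sketch needs correction. First, the step you flag as ``the bulk of the technical work'' --- producing the strictly nested subsurfaces --- is exactly Lemma 4.2, which is already established (and essentially due to \cite{Au}): the $\omega$-limit set of a sequence of invariant components is itself a union of invariant components in the limiting direction and contains a cofinal subsequence of the original components. There is no single global chain $X_0 \subsetneq \dots \subsetneq X_r$ indexed by depth; the induction in Proposition 4.4 only ever manufactures a local triple $A \subset B \subset C$ by applying Lemma 4.2 twice, and that is enough to feed Lemma 4.3.

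Second, your endpoint analysis for the refined bounds is misconceived. Zeros and poles do not govern opposite ends of a filtration (``deepest'' versus ``outermost'' layers); they enter together through the \emph{complement} of the top invariant subsurface. If the rank equals $2g+2$, Proposition 4.4 produces an invariant subsurface $A$ of genus exactly $g$ with at least two boundary components; the pieces of $X \setminus A$ are then at least two topological disks, each of which must contain a pole (there is no finite-area translation disk) and carry at least one conical singularity. Hence rank $2g+2$ forces at least two zeros \emph{and} at least two poles, which establishes the ``$n\le 1$'' and ``$p\le 1$'' refinements simultaneously rather than by separate mechanisms. The further drop to $2g$ when $n=p=1$ is a holonomy argument: the unique complementary disk has boundary with total holonomy equal to the residue of the unique pole, namely zero, and this is incompatible with having a single conical singularity on that boundary.
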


The structure of the paper is the following: \newline
- In Section 3, we recall the background about translation surfaces: flat metric, saddle connections, moduli space, core, directional foliation.\newline
- In Section 4, drawing on an analysis of the relation between the Cantor-Bendixson rank and the topological complexity of invariant components, we prove the main theorem.

\section{Translation structures defined by meromorphic differentials}

\subsection{Translation structures}

Let $X$ be a compact Riemann surface and let $\phi$ be a meromorphic $1$-form. We denote by $\Lambda$ the set of zeroes of $\phi$ and by $\Delta$ the set of its poles.\newline

Outside $\Lambda$ and $\Delta$, integration of $\phi$ gives local coordinates whose transition maps are of the type $z \mapsto z+c$. The pair $(X,\phi)$ seen as a compact surface with such an atlas is called a \textit{translation surface with poles}.\newline

In a neighborhood of a zero of order $a>0$, the metric induced by $\phi$ admits a conical singularity of angle $(1+a)2\pi$, see \cite{Zo} for details.

Geometry of neighborhoods of poles is not useful in our study because saddle connections belong to the core of the translation surface which is precisely the complement of a union of neighborhoods of the poles.

\subsection{Moduli space}

If $(X,\phi)$ and $(X',\phi')$ are translation surfaces such that there is a biholomorphism $f$ from $X$ to $X'$ such that $\phi$ is the pullback of $\phi'$, then $f$ is an isometry for the flat metrics defined by $\phi$ and $\phi'$.\newline
We define the moduli space of meromorphic differentials as the space of equivalence classes of translation surfaces with poles $(X,\phi)$ up to biholomorphism preserving the differential.\newline
We denote by $\mathcal{H}^{k}(a_1,\dots,a_n,-b_1,\dots,-b_p)$ the \textit{stratum} that corresponds to meromorphic $1$-forms with singularities of orders $a_1,\dots,a_n,-b_1,\dots,-b_p$. The integer $g$ is the genus of the underlying Riemann surface. For sake of simplicity we will use the term of strata of genus $g$ to designate strata of differentials that exist only on surfaces of genus $g$.\newline

\begin{defn} A saddle connection is a geodesic segment joining two conical singularities of the flat surface such that all interior points are not conical singularities.
\end{defn}

We associate homology classes of $H_{1}(\widetilde{X}\setminus\Delta,\Lambda)$ to saddle connections. Two saddle connections are said to be parallel when their relative homology classes are linearly dependant over $\mathbb{R}$.\newline
The holonomy vector of a saddle connection is the period of its relative homology class. Its direction is the argument of the period and its length is the modulus of the period. Strata are complex-analytic orbifolds with local coordinates given by the period map, see \cite{BS}.\newline

\subsection{Core of a translation surface with poles}

Nearly all of the geometry of a translation surface with poles is encompassed in a subsurface of finite area that is the convex hull of the conical singularities. This notion of core of a translation surface with poles was introduced in \cite{HKK} and developed in \cite{Ta}. Most foundational results about the core given in this subsection are proved in the latter paper.\newline
In \cite{Au}, Aulicino considers \textit{slit translation surfaces} that are translation surfaces (of finite area) with distinguished cuts that play the role of the boundary of the core. For that reason, our foundational results agree.

\begin{defn} A subset $E$ of a translation surface with poles $(X,\phi)$ is \textit{convex} if and only if every element of any geodesic segment between two points of $E$ belongs to $E$.\newline
The convex hull of a subset $F$ of a translation surface with poles $(X,\phi)$ is the smallest closed convex subset of $X$ containing $F$.\newline
The core of $(X,\phi)$ is the convex hull $core(X)$ of the conical singularities $\Lambda$ of the meromorphic differential.\newline
$\mathcal{I}\mathcal{C}(X)$ is the interior of $core(X)$ in $X$ and $\partial\mathcal{C}(X) = core(X)\ \backslash\ \mathcal{I}\mathcal{C}(X)$ is its boundary.
\end{defn}

The core separates the poles from each other. The following lemma shows that the complement of the core has as many connected components as there are poles. We refer to these connected components as \textit{domains of poles}. It has been proved as Proposition 4.4 and Lemma 4.5 in \cite{Ta}.

\begin{prop} For translation surface with $p$ poles $(X,\phi)$, $\partial\mathcal{C}(X)$ is a finite union of saddle connections. Moreover, $X \setminus core(X)$ has $p$ connected components. Each of them is a topological disk that contains a unique pole.
\end{prop}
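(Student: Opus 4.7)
The plan is to prove the three assertions in sequence, building from local flat-convex geometry to global topology.

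First, for the characterization of $\partial\mathcal{C}(X)$: take a point $x\in\partial\mathcal{C}(X)\setminus\Lambda$. In a small Euclidean chart around $x$, the core appears as a closed convex planar set whose boundary passes through $x$, so by convexity this boundary is locally a straight segment. I would then extend this segment maximally along $\partial\mathcal{C}(X)$ in both directions: the extension cannot terminate at a regular point, since the same convexity argument applied there would force further extension, so each endpoint must be a conical singularity. This shows every regular boundary point lies in the interior of a saddle connection contained in $\partial\mathcal{C}(X)$. Finiteness then follows because $\Lambda$ is finite and at each conical singularity only finitely many outgoing directions can support the convex hull (the cone angle being finite).

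Second, for counting the complementary components, I would first establish that each pole $P_j$ has a neighborhood $V_j\subset X\setminus core(X)$. In a flat chart adapted to a pole of order $b_j\geq 2$, the metric is asymptotic to that of an infinite cyclic cover of the exterior of a large disk, and one checks that no geodesic segment joining conical singularities can penetrate arbitrarily deep into the pole neighborhood. This gives at least $p$ components. For the converse, suppose some component $U$ of $X\setminus core(X)$ contains no pole. Then $\overline{U}$ is compact and contained in the finite-area part of $X\setminus\Delta$, with $\partial U\subset\partial\mathcal{C}(X)$. I would argue that $core(X)\cup\overline{U}$ is itself a closed convex set containing $\Lambda$, contradicting minimality of the core. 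Convexity comes from the fact that a geodesic joining two points of $core(X)\cup\overline{U}$ cannot escape this set without crossing $\partial\mathcal{C}(X)$ into a different complementary region and coming back in, producing a shortcut incompatible with convexity of $core(X)$ itself. Hence each complementary component contains at least one pole, and since distinct poles lie in distinct components, there are exactly $p$ of them.

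Third, for the disk property, each $U_j$ contains exactly the pole $P_j$ by the previous step, and $P_j$ admits a neighborhood base of topological disks in $X$ since $X$ is smooth there. By the first step, $\partial U_j$ is a union of saddle connections. I would then show that $U_j$ deformation retracts onto a small loop around $P_j$: the annular region of $U_j$ between a small disk neighborhood of $P_j$ and $\partial U_j$ is built from finitely many flat strips bounded by saddle connections and by arcs of the disk, and none of these strips contributes extra topology. It follows that $U_j$ is a topological disk.

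The most delicate step is the second one, specifically the verification that $core(X)\cup\overline{U}$ is convex whenever $U$ is a pole-free complementary component. A careful control of how geodesics cross $\partial\mathcal{C}(X)$ is needed; an alternative, more robust approach would be to compare Euler characteristics via $\chi(X)=\chi(core(X))+\sum_j\chi(\overline{U_j})$, using the first step to identify $\partial\mathcal{C}(X)$ with a disjoint union of circles and ruling out pole-free components by a dimension count. Once step two is settled, the remaining assertions follow quickly from local flat geometry near the poles and the smooth structure of $X$ at those points.
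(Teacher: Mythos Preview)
The paper does not give a proof of this proposition; it is quoted from \cite{Ta} (Proposition~4.4 and Lemma~4.5 there), so there is no in-paper argument to compare against.

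Evaluating your sketch on its own: the first step is reasonable, though ``by convexity this boundary is locally a straight segment'' tacitly uses that $core(X)$ is the convex hull of a \emph{finite} set. The substantive gap is in the second step. From ``each pole $P_j$ has a neighborhood $V_j\subset X\setminus core(X)$'' you conclude ``this gives at least $p$ components''---but it does not: nothing you have written rules out two poles lying in the same connected component of the complement. You then assert ``since distinct poles lie in distinct components'' with no argument. Your minimality-of-the-core argument does establish that every complementary component contains \emph{at least one} pole; what is missing is \emph{at most one}, and the disk property in your third step does not supply it either, since an open topological disk in $X$ can perfectly well contain several poles. Your Euler-characteristic alternative is on the right track but needs to be made effective: one has to feed in the flat-geometric contribution of each excised pole neighborhood (determined by the order of the pole) together with the sign constraint on the turning along $\partial U$ coming from convexity of $core(X)$, and then a Gauss--Bonnet balance on $\overline{U}$ with those neighborhoods removed forces a single pole and $\chi(\overline{U})=1$ simultaneously. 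The phrase ``none of these strips contributes extra topology'' in step three is likewise an assertion rather than an argument and would be subsumed by such a computation.
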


\begin{rem}
We can define a Cantor-Bendixson rank for every connected component of the interior of the core of the surface. Since strata are decomposed into chambers depending on the topological pair $(X,core(X))$, we can deduce specific optimal ranks for chambers.
\end{rem}

\subsection{Directional Foliation}

Dynamics of translation surfaces with poles are different from translations of finite area because most trajectories go to infinity. The following proposition describes the invariant components of the directional foliation in a translation surface with poles. The following result has been proved as Proposition 5.5 in \cite{Ta}.

\begin{prop} Let $(X,\phi)$ be a translation surface with poles. Cutting along all saddle connections sharing a given direction $\theta$, we obtain finitely many connected components called \textit{invariant components} of the directional foliation in direction $\theta$. 
There are four types of invariant components: \newline
- \textbf{finite area cylinders} where the leaves (waist curves) are periodic with the same period,\newline
- \textbf{minimal components} of finite area where the foliation is minimal and whose dynamics are given by a nontrivial interval exchange map,\newline
- \textbf{infinite area cylinders} bounding a simple pole and where the leaves are periodic with the same period,\newline
- \textbf{free components} of infinite area where generic leaves go from a pole to another or return to the same pole.\newline
\newline
Finite area components belong to $core(X)$.
\end{prop}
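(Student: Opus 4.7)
The plan is to establish finiteness of the cutting locus first, then classify each resulting piece by combining a Poincar\'e--Bendixson type argument on compact pieces with a local analysis at poles for non-compact pieces.

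First, I would prove that there are only finitely many saddle connections in direction $\theta$. By convexity of $core(X)$, every saddle connection is contained in it, and Proposition~3.3 says that $\partial\mathcal{C}(X)$ is itself a finite union of saddle connections. Hence $core(X)$ is a compact flat surface with polygonal boundary, and on such a surface the set of holonomy vectors of saddle connections is a discrete subset of $\mathbb{R}^{2}$; intersecting with the ray of direction~$\theta$ gives a finite set. Cutting $X$ along these finitely many saddle connections therefore yields finitely many connected components, each carrying a directional foliation with no interior singularities.

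Next I would classify each such component $C$. If $C \subset core(X)$, then $C$ has finite area and the foliation on $C$ admits no interior saddle connection, so by a classical Maier-type decomposition of orientable measured foliations on compact surfaces with boundary, the dynamics on $C$ are either entirely periodic (a finite area cylinder) or minimal (a minimal component). If $C$ is not contained in $core(X)$, Proposition~3.3 forces $C$ to meet the disk-neighbourhood of exactly one pole $P$. A local normal form for $\phi$ near $P$ then dictates the outcome: when $P$ is simple, a punctured neighbourhood of $P$ is isometric to a semi-infinite flat cylinder, so $C$ is the corresponding infinite area cylinder precisely when $\theta$ is parallel to its waist curve, and otherwise $C$ is a free component; when $P$ has order at least $2$, every leaf of the directional foliation sufficiently close to $P$ escapes to infinity in both forward and backward time, leaving no room for recurrence, so $C$ must again be a free component. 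The remaining claim, that finite-area components sit inside $core(X)$, is then immediate from Proposition~3.3, since each connected component of $X\setminus core(X)$ is a topological disk around a pole and is therefore of infinite area.

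The main obstacle I expect is the dynamical classification of free components near higher-order poles: one needs to rule out hidden minimal subregions or closed leaves inside $C$ by exploiting the normal form $\phi \sim z^{-b}\,dz$ near a pole of order $b$, and to check that the number of free components remains finite despite their non-compactness. Once these local and dynamical inputs are in place, the global picture --- finitely many cylinders, minimal components, infinite area cylinders and free components --- assembles cleanly from the cutting procedure in step one.
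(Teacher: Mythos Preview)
The paper does not actually prove this proposition: it is quoted verbatim as Proposition~5.5 of \cite{Ta}, so there is no ``paper's own proof'' to compare with. Your outline is nonetheless in the right spirit --- finiteness of the cutting locus followed by a Maier-type classification on the finite-area pieces and a local normal-form analysis near the poles is exactly how the result is established in \cite{Ta}.

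There is, however, a genuine gap in your treatment of the infinite-area case. You write that if a component $C$ is not contained in $core(X)$, then Proposition~3.3 forces $C$ to meet the disk-neighbourhood of \emph{exactly one} pole. This is false: Proposition~3.3 only says that each connected component of $X\setminus core(X)$ contains a single pole, not that an invariant component of the directional foliation meets only one such disk. In fact the very statement you are trying to prove allows free components whose generic leaves travel ``from a pole to another'', so such a component must contain neighbourhoods of at least two distinct poles. Your subsequent case analysis (simple pole versus higher-order pole) therefore breaks down, since a free component can mix several poles of possibly different orders, together with a piece of the core through which the trajectories pass. The correct argument analyses the behaviour of a generic leaf: either it is periodic (cylinder cases) or, failing recurrence, it must eventually enter a pole domain and escape to infinity on each end, which is where the local normal forms are invoked --- but per leaf end, not per component.

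A smaller point: your finiteness argument for saddle connections in direction~$\theta$ via discreteness of holonomy vectors is incomplete. Discreteness alone does not give finiteness along a ray (you also need a length bound, which does come from compactness of $core(X)$), and even then you are counting holonomy vectors, not saddle connections --- several saddle connections can share the same period. The direct argument is cleaner: each conical singularity emits only finitely many separatrices in direction~$\theta$ (as many as its cone angle allows), and every saddle connection in direction~$\theta$ is such a separatrix, so there are finitely many.
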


The simplest example of a minimal component in a translation surface is a flat torus foliated in an irrational direction. Its dynamics is given by an irrational rotation.\newline

A first step in the interpretation of the set $\Theta(X,\phi)$ of directions of saddle connections in terms of invariant components is the following proposition, proved as Proposition 5.10 in \cite{Ta} and Lemma 4.4 in \cite{Au}.

\begin{prop} Let $(X,\phi)$ be a translation surface with poles. Then the directions of the waist curves of finite area cylinders and the directions of minimal components of $(X,\phi)$ are exactly the accumulation points of the set $\Theta(X,\phi)$ of directions of saddle connections of $(X,\phi)$. In particular, $\Theta(X,\phi)$ is closed in the unit circle $\mathcal{S}^{1}$.
\end{prop}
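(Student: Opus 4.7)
The plan is to establish the two inclusions characterizing accumulation points of $\Theta(X,\phi)$, after which closedness follows immediately.

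For the inclusion that the direction of a finite area invariant component is accumulated in $\Theta(X,\phi)$, I would argue separately in the cylinder and minimal cases. In the cylinder case, the family of diagonal saddle connections obtained by joining a singularity on one boundary component to one on the opposite boundary while winding $k$ times around the cylinder provides a sequence of saddle connections whose directions converge to $\theta$ as $k\to\infty$. In the minimal case, the component is itself a finite area translation surface with boundary made of $\theta$-saddle connections; closing it up (for instance by doubling along the boundary) yields a compact finite area translation surface, to which the classical density of directions of saddle connections applies. Saddle connections of the closed-up surface with directions sufficiently close to $\theta$ can be chosen to stay inside the original component, giving genuine saddle connections of $(X,\phi)$ with directions accumulating to $\theta$.

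For the converse, suppose $\theta$ is accumulated by directions $\theta_n$ of saddle connections $s_n$, with $\theta_n$ pairwise distinct. By Proposition 3.5, the foliation in direction $\theta$ decomposes into invariant components, and the aim is to rule out that every invariant component is of infinite area type (free or infinite cylinder bounding a simple pole). The main obstacle is a stability argument: one must show that free and infinite cylinder components are open under small rotations of the direction, essentially because in such components every generic leaf escapes to a pole in finite flat time and nearby directions inherit this escape behavior. Combined with the fact that every $s_n$ lies in the core, which has finite area and only finitely many conical singularities, this forces a contradiction when no finite area component exists in direction $\theta$, hence the existence of a finite area cylinder or minimal component in direction $\theta$.

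Closedness of $\Theta(X,\phi)$ in $\mathcal{S}^{1}$ then follows since the direction of any finite area invariant component is the common direction of the saddle connections bounding it, hence itself belongs to $\Theta(X,\phi)$. The delicate step throughout is the necessity direction, specifically controlling the $s_n$ when their lengths are not a priori bounded and formalizing the openness of free and infinite cylinder components under small perturbations of the foliation direction.
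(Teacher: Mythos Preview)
The paper does not supply its own proof of this proposition: it is quoted as Proposition~5.10 of \cite{Ta} and Lemma~4.4 of \cite{Au}. So there is no in-paper argument to compare against, and your proposal must be assessed on its own terms.

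Your outline is broadly on the right track, but two steps are genuine gaps rather than routine details.

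\textbf{Forward direction, minimal case.} The doubling trick does produce a closed finite-area translation surface with dense saddle connection directions, but your key assertion --- that saddle connections of the doubled surface with direction sufficiently close to $\theta$ ``can be chosen to stay inside the original component'' --- is not justified and is in fact false without further argument: generic saddle connections of the double will cross the fold. What one actually uses is that the minimal component is itself a finite-area translation surface with boundary, and one argues directly inside it. A standard route is to fix a conical singularity on the boundary, launch a trajectory in a direction $\theta'$ close to $\theta$, and use minimality (or the associated interval exchange on a transversal) to show that this trajectory recurs to an arbitrarily small neighborhood of a singularity, producing a genuine saddle connection of $(X,\phi)$ in direction near $\theta$. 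Alternatively one invokes the general fact that in any finite-area translation surface the set of saddle connection directions is dense; but either way the doubling is a detour, not the mechanism.

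\textbf{Converse direction.} You correctly isolate the crux: if the foliation in direction $\theta$ has only infinite-area components, one wants an openness statement preventing saddle connections in nearby directions. But ``generic leaves escape to a pole'' is not by itself enough, because the saddle connections $s_n$ may have unbounded length, and an openness statement valid only on bounded time intervals does not exclude them. The actual argument (as carried out in \cite{Ta}, and echoed in the proof of Lemma~4.2 of the present paper) localizes: pick a conical singularity lying in the closure of infinitely many $s_n$, follow the $\theta$-trajectory from it, and observe that if this trajectory went to a pole it would do so through an open region of the domain of a pole, which is disjoint from the core and hence from every $s_n$ --- a contradiction since the $s_n$ must enter any neighborhood of that trajectory. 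Thus the $\theta$-trajectory is trapped in the core and is either critical or minimal, yielding a finite-area component. Your sketch gestures at this but does not supply the localization that makes the stability usable against possibly long $s_n$.
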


\section{Invariant components and Cantor-Bendixson rank}

In this section, we consider only invariant components of finite area because they are those which contribute to the set of saddle connections (they belong to the core of the surface). For sake of simplicity, we use the term \textit{invariant component} only in the case of invariant components of finite area.

\subsection{$\omega$-limit sets}

In the following, we use the notion of $\omega$-limit set to describe how a sequence of invariant components accumulates on another. This approach was already used by Aulicino in \cite{Au}.

\begin{defn}
In a translation surface with poles $(X,\phi)$, we consider a sequence $(B_{k}(\theta_{k}))_{k \in \mathbb{N}}$ of invariant components. The $\omega$-limit set of sequence $(B_{k})_{k \in \mathbb{N}}$ is $\Omega = \bigcap\limits_{n=1}^{\infty} \overline{
\bigcup\limits_{k\geq n} B_{k}}$.
\end{defn}

The following lemma corresponds to Lemma 7.2 and Lemma 7.3 in \cite{Au}. We reformulate its proof in the context of translation surfaces with poles.

\begin{lem}
In a translation surface $X$, we consider a sequence $(B_{k}(\theta_{k}))_{k \in \mathbb{N}}$ of invariant components of direction $\theta_{k}$ where $\theta_{k} \longrightarrow \theta$. Its $\omega$-limit set $\Omega$ is a union of invariant components that belong to the same direction $\theta$. Moreover, there is a subsequence $\alpha$ such that any component $B_{\alpha(k)}$ is included in $\Omega$.
\end{lem}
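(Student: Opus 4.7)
The plan is to prove the two assertions of the lemma in turn, relying on the compactness of the core of $(X,\phi)$ together with Proposition 3.5.

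For the first assertion, I would show that $\Omega$ is saturated by leaves of the $\theta$-directional foliation. Fix a non-singular point $x \in \Omega$. By definition of the $\omega$-limit set, there exist indices $k_{j} \to \infty$ and points $x_{k_{j}} \in B_{k_{j}}$ with $x_{k_{j}} \to x$. For any flow segment $\gamma$ through $x$ in direction $\theta$ that avoids the conical singularities, the corresponding flow segments through $x_{k_{j}}$ in direction $\theta_{k_{j}}$ are defined for $j$ large and converge uniformly to $\gamma$. As each such segment lies in $B_{k_{j}}$, the limit segment $\gamma$ lies in $\Omega$. Hence $\Omega$ is a closed subset of the core saturated by the $\theta$-directional foliation, and Proposition 3.5 applied in direction $\theta$ identifies $\Omega$ as a union of invariant components of direction $\theta$ together with their bounding saddle connections.

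For the second assertion, I would use Hausdorff compactness. Since the closures of the $B_{k}$ are closed subsets of the compact core, I can extract a subsequence $(B_{\alpha(k)})$ whose closures converge in the Hausdorff metric to some closed set $B_{\infty}$. Fix any $k_{0}$ and any $x \in B_{\alpha(k_{0})}$; for every $\epsilon > 0$ and every $n$, Hausdorff convergence supplies some $k \geq n$ with $d(x, B_{\alpha(k)}) < \epsilon$. Hence $x \in \overline{\bigcup_{j \geq n} B_{j}}$ for every $n$, i.e.\ $x \in \Omega$. Dropping an initial segment of the subsequence if needed, this yields $B_{\alpha(k)} \subseteq \Omega$ for every $k$.

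The main obstacle I expect is in the first assertion, specifically handling conical singularities that may lie inside $\Omega$: around such a singularity the flow-segment convergence is not uniform across the whole cone angle, so the naive argument only saturates $\Omega$ along leaves through non-singular points. One expects that a conical singularity in $\Omega$ is automatically approached by such leaves from every angular sector, closing the saturation argument, and that the bounding saddle connections of the resulting invariant components in direction $\theta$ arise as Hausdorff limits of the boundary saddle connections of the $B_{k_{j}}$ (whose lengths are uniformly controlled by the diameter of the core). Putting these pieces together with Proposition 3.5 completes the structural description of $\Omega$.
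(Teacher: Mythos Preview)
Your argument for the first assertion is essentially the paper's: you saturate $\Omega$ by $\theta$-leaves via convergence of flow segments and then invoke Proposition~3.5. The paper is slightly more explicit about why a $\theta$-trajectory starting at a point of $\Omega$ cannot escape to a pole (stability of such trajectories under perturbation would forbid nearby finite-area invariant components), but your implicit use of $\Omega\subset core(X)$ amounts to the same thing.

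The second assertion, however, has a genuine gap. Hausdorff convergence $\overline{B_{\alpha(k)}}\to B_\infty$ does \emph{not} imply that a point $x\in B_{\alpha(k_0)}$ is approximated by points of later $B_{\alpha(k)}$: Hausdorff convergence compares each term to the limit $B_\infty$, not earlier terms to later ones. A simple metric counterexample is $B_k=[0,1]\cup[2,2+1/k]$ in $\mathbb{R}$: the Hausdorff limit is $[0,1]\cup\{2\}$, the $\omega$-limit set is also $[0,1]\cup\{2\}$, yet no $B_k$ is contained in it, and dropping an initial segment does not help. What Hausdorff compactness \emph{does} give you is $B_\infty\subset\Omega$; but from there you still have to rule out that every $B_{\alpha(k)}$ protrudes across $\partial\Omega$, and this is exactly where geometric input specific to translation surfaces is needed.

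The paper handles this by a contradiction argument that exploits the flat structure: if infinitely many $B_k$ cross a boundary saddle connection $\sigma$ of $\Omega$, one follows the $\theta$-direction around the endpoints of $\sigma$ to build a cyclic chain of saddle connections bounding a cylinder in direction $\theta$ on the outside of $\Omega$; a closed geodesic of this cylinder is then shown to meet $\Omega$, forcing the cylinder itself into $\Omega$ and contradicting that $\sigma$ was a boundary component. Your Hausdorff argument would need to be replaced (or supplemented) by something of this kind.
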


\begin{proof}
First, $\Omega$ is clearly a closed subset of $core(X)$ that contains at least one conical singularity. At least one trajectory in direction $\theta$ starting from this singularity also belongs to $\Omega$ because it is contained in the closure of an infinite sequence of invariant components whose direction converges to $\theta$. If the trajectory goes to a pole, we get a contradiction because such a trajectory would be stable under small perturbations and in the sequence there could not be invariant components whose direction would lie in a neighborhood of $\theta$. Therefore, either the trajectory is critical (ends in another conical singularity) or it is minimal and accumulates on a minimal invariant component whose boundary is formed by saddle connections of direction $\theta$. We proceed this way for every point of $\Omega$. Therefore, $\Omega$ is a union of saddle connections and invariant components in direction $\theta$. Following Proposition 3.5, there is only a finite number of invariant components in a given direction. Therefore, $\Omega$ is a finite union of invariant components and saddle connections.\newline
By contradiction, we assume there is a number $N$ such that $\forall n \geq N$, $B_{n}$ is not included in $\Omega$. Then there is a saddle connection $\sigma$ of the boundary of $\Omega$ such that there is a subsequence $\beta$ such that every component $B_{\beta(n)}$ crosses $\sigma$. We also assume $\theta_{\beta(n)}$ converges monotonically to $\theta$. Saddle connection $\sigma$ goes from a conical singularity $z_{0}$ to another conical singularity $z_{1}$. We assume $\sigma$ is a left boundary of $\Omega$. Turning clockwise around $z_{1}$ with an angle of $\pi$, we get another direction that belongs to the closure of components $B_{\beta(n)}$. Using the same argument as previously, we show that this direction is either minimal or critical. In the first case, we get another component in direction $\theta$ that would necessarily belong to $\Omega$ and that would be located precisely at the left of $\sigma$. Therefore, the direction is critical. We get a saddle connection from $z_{1}$ to another saddle connection $z_{2}$. Turning clockwise around $z_{2}$ with an angle of $\pi$, we get another direction to which we can apply the same reasoning. This process ends with a cyclic chain of saddle connections bounding a cylinder to the left of $\sigma$ whose periodic geodesics belong to direction $\theta$. We choose a closed geodesic $\gamma$ of the cylinder. It is crossed by every component $B_{\beta(n)}$. There is at least one point $z$ of $\gamma$ such that any neighborhood of $z$ intersects an infinite number of components $B_{\beta(n)}$. Such a point $z$ thus belongs to $\Omega$ and the whole cylinder is included in $\Omega$ too. This contradicts the hypothesis. Therefore, there is always an infinite number of components of the sequence that always remain in $\Omega$.
\end{proof}

\subsection{Topological upper bound}

We first prove that in a sequence of inclusions of three invariant subsurfaces (union of invariant components in the same direction), the genus should increase. The intuitive idea is that between two invariant subsurfaces, there is the gluing of a pair of pants. This operation may either increase the genus or the number of connected components (depending on the way the pair of pants is glued). We prove that two consecutive such operations always increase the genus at least one time.

\begin{lem}
In a translation surface with three invariant subsurfaces $A \subset B \subset C$ such that $\theta_{A} \neq \theta_{B}$ and $\theta_{B} \neq \theta_{C}$, the genus $g_{C}$ of $C$ is strictly bigger than the genus $g_{A}$ of $A$.
\end{lem}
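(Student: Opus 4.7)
The plan is to analyze each of the two transitions $A \subset B$ and $B \subset C$ via the Euler characteristic of the corresponding topological layer and then combine the two analyses. Set $D_{AB} = \overline{B \setminus A}$ and $D_{BC} = \overline{C \setminus B}$; since $\partial A$ and $\partial B$ are each disjoint unions of circles,
\[
\chi(C) = \chi(A) + \chi(D_{AB}) + \chi(D_{BC}).
\]

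First I would apply a Poincar\'e--Hopf type argument to bound $\chi(D_{AB}) \leq 0$. Each connected component of $D_{AB}$ is a compact subsurface of $B$ carrying the restriction of the $\theta_B$-directional foliation, which has no singularities in the interior of $D_{AB}$ (they all sit at conical points on the boundary). The boundary arcs are of two kinds: arcs in direction $\theta_A$ (transverse to the foliation, inherited from $\partial A$) and arcs in direction $\theta_B$ (parallel to the foliation, inherited from $\partial B$). An index count with these mixed boundary conditions yields that each component has non-positive Euler characteristic, with equality if and only if the component is a topological annulus. The same bound applies symmetrically to $D_{BC}$.

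Next I would analyze the equality case. If $g_A = g_B$, then $D_{AB}$ is a disjoint union of annuli, and classifying these annuli by how their two boundary circles distribute between $\partial A$ and $\partial B$ shows that the transition $A \to B$ decomposes into moves of three kinds: (a) merging two components of $A$ across an annulus joining two $\partial A$-circles (preserves $g$, strictly decreases the number of components $k$), (b) adjoining a free $\theta_B$-cylinder disjoint from $A$ (preserves $g$), or (c) extending a $\partial A$-circle to a $\partial B$-circle through a mixed annulus (preserves $g$, $b$, and $k$ entirely). The analogous classification applies to $D_{BC}$ if $g_B = g_C$.

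The heart of the proof is then combining the two layers. If $g_A = g_B = g_C$, both $D_{AB}$ and $D_{BC}$ are unions of annuli as above. Whenever a mixed annulus of $D_{AB}$ terminates at a circle $\gamma \subset \partial B$, a mixed annulus of $D_{BC}$ must start at $\gamma$ (otherwise $\gamma$ would bound an incomplete side of $B$ inside $C$); concatenating the two annuli across $\gamma$ yields an embedded annulus in $C$ whose two boundary circles lie in directions $\theta_A$ and $\theta_C$ respectively. Since these circles cobound an annulus, their holonomy vectors are proportional; yet each is constrained to the real line $\mathbb{R} e^{i\theta_A}$ or $\mathbb{R} e^{i\theta_C}$, and combined with the curve $\gamma$ of direction $\theta_B$ sitting as a transverse circle in the middle, the distinctness $\theta_A \ne \theta_B \ne \theta_C$ forces incompatibilities that rule out the configuration. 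The main obstacle is executing this last step rigorously across all the combinatorial configurations simultaneously --- in particular handling the interaction of merging and free-cylinder annuli with the mixed ones, and the subcase $\theta_A = \theta_C$ where the outer directions coincide but the middle one $\theta_B$ is still distinct.
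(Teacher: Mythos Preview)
Your equality-case analysis contains a genuine gap. The inference ``if $g_A = g_B$, then $D_{AB}$ is a disjoint union of annuli'' does not follow from your Poincar\'e--Hopf bound. Equality of genera only forces each connected component of $D_{AB}$ to have genus zero; it does \emph{not} force $\chi(D_{AB}) = 0$. For instance, a pair-of-pants component $P \subset D_{AB}$ whose three boundary circles all lie in $\partial A$, on three distinct connected components of $A$, is glued back to $A$ without raising the genus (it merely merges three components into one), yet $\chi(P) = -1$. Your trichotomy (a)--(c) therefore omits an entire class of genus-zero pieces, and the layer-combining step---which you already flag as the ``main obstacle''---cannot close without accounting for them. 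The holonomy-proportionality sketch at the end is also fragile: two circles cobounding an embedded annulus are homologous in the ambient surface, but there is no reason their periods should be proportional unless the annulus is actually a flat cylinder, which is precisely what is in question.

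For comparison, the paper's argument takes a rather different route. Instead of decomposing layer by layer and classifying the pieces, it works directly with a connected component $X$ of $C \setminus A$ (which has genus zero under the contradiction hypothesis $g_A = g_C$) and exploits the $\theta_B$-foliation \emph{inside} $X$: trajectories in direction $\theta_B$ starting from $\partial A$ sweep out parallelograms, and because $X$ has genus zero a non-crossing chord-diagram argument singles out an extremal parallelogram that cuts off a subregion $Y$ whose boundary involves only the directions $\theta_B$ and $\theta_C$. A discrete Gauss--Bonnet angle-defect computation on $Y$ then produces the contradiction. This sidesteps any global classification of the components of $D_{AB}$ and $D_{BC}$ and the combinatorial case explosion you anticipate, at the price of a somewhat delicate corner-by-corner angle accounting on the boundary of $Y$.
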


\begin{proof}
Since we have clearly $g_{A} \leq g_{B} \leq g_{C}$, we just have to deduce a contradiction from $g_{A} = g_{B} = g_{C}$. We consider a connected component $X$ of the symmetric difference $A \Delta C$. We deduce from the initial hypothesis that $X$ is a surface of genus zero with a boundary formed by saddle connections in directions $\theta_{A}$ and $\theta_{C}$. If $X$ has several boundary components, then exactly one of those contains saddle connections in direction $\theta_{A}$ because otherwise the union of $A$ with $X$ would amount to adding a handle (the genus would increase). We denote by $X^{1}$ this special boundary component. The other boundary components are formed by saddle connections in direction $\theta_{C}$. Without loss of generality, we assume that the latter boundary components are not the boundary of cylinders in direction $\theta_{C}$ inside $X$. In this case, we could cut out these cylinders and get a smaller surface without such cylinders that displays the same situation (inclusion of three consecutive invariant surfaces). Indeed, any trajectory in direction $\theta_{B}$ that would enter into the cylinder would leave the surface. Therefore, the cylinder is disjoint from $B$. Consequently, if there is a counterexample with cylinders, there also exists a counterexample without cylinders.\newline

Then, we consider trajectories in $X$ starting from the boundary of $A$ and that belong to direction $\theta_{B}$. In a surface of genus zero, there is no minimal trajectories. Therefore, such trajectories either hit a singularity or finally cross another boundary of the surface. Because of their direction, these trajectories clearly stay in $B$. Therefore, generically, they go from a boundary saddle connection of direction $\theta_{A}$ to another (with the opposite orientation). These trajectories describe parallelograms inside $X$.\newline
If we identify the special boundary component $X^{1}$ with a circle, choosing one trajectory in each of these parallelograms defines a chord diagram where chords do not cross each other (we remind that $X$ is of genus zero). In any such drawing, there is at least one chord that separates the disk into two connected components one of which does not contain any of the drawn chords. This implies that there is a parallelogram whose complement in $X$ has two connected components one of which has no boundary saddle connection in direction $\theta_{A}$, see Figure 1. We denote by $Y$ this connected component. It is of genus zero and if it has several boundary components, only one of them has saddle connections in both direction $\theta_{B}$ and $\theta_{C}$. We denote by $Y^{1}$ this special boundary component.\newline

\begin{figure}
\includegraphics[scale=0.3]{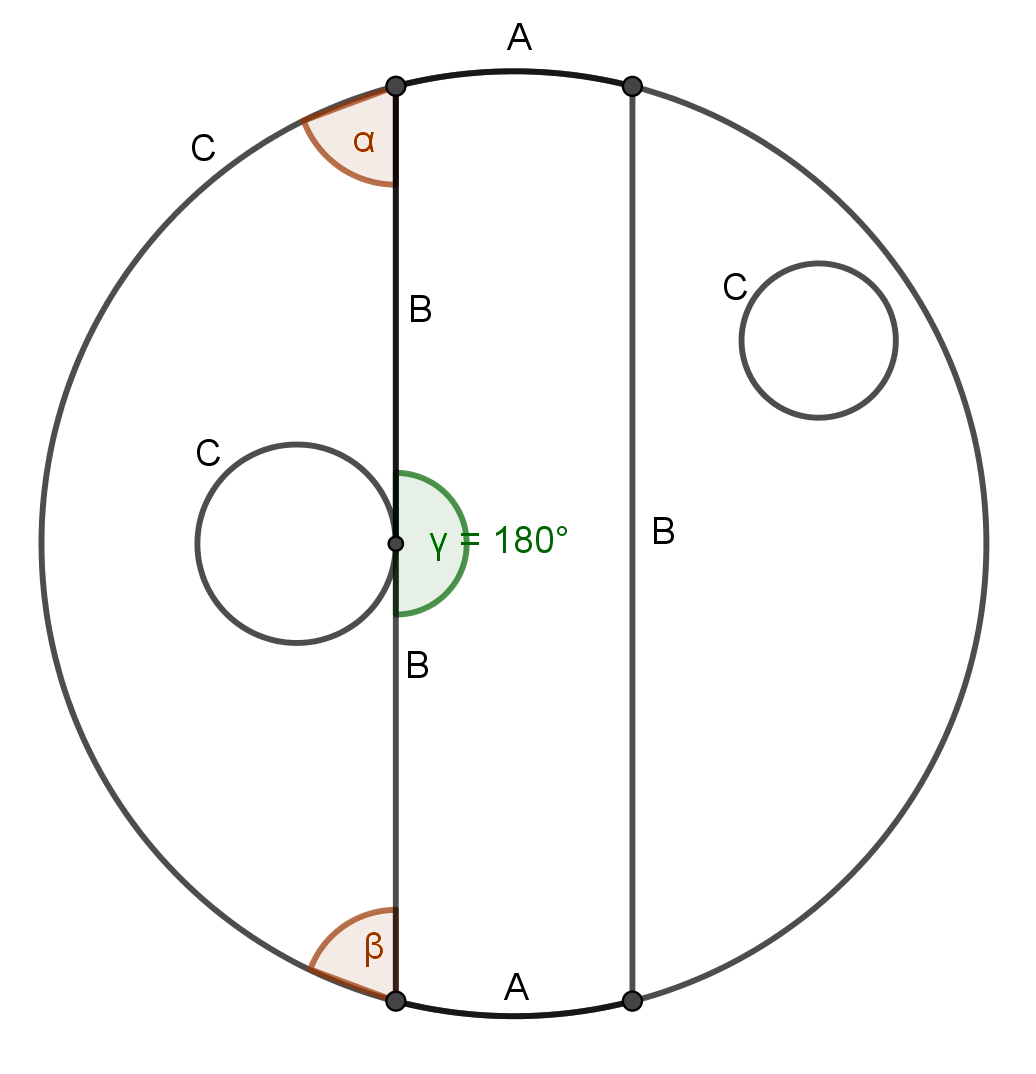}
\caption{$X$ is a surface of genus zero with three boundary components. In the middle of the disk there is a parallelogram that separates the surface into two components. The left part (in grey) is the component $Y$ we consider for the angle defect argument.}
\end{figure}

A discrete Gauss-Bonnet theorem (obtained by a geodesic triangulation of the flat surface and a counting of the triangles) implies that the sum of the angle defects in translation surface with a boundary is a topological invariant. In any corner of the boundary, if the angle of the corner is $\alpha$, then the angle defect is $\alpha-\pi$. For a singularity in the interior the surface of angle $\beta$, the angle defect is $\beta-2\pi$. In a surface of genus zero with $b$ boundary components, the sum of the angle defects is $(b-2)2\pi$. We prove first that the contribution of the boundary component $Y^{1}$ should be negative.\newline
First, if there are singularities in the interior of the surface, their angle is an integer multiple of $\pi$ so there angle defect is not negative. Corners in boundary components different from $Y^{1}$ are between saddle connections that both lie in direction $\theta_{C}$. Their angles are thus multiple integers of $\pi$. In the beginning of the proof, we excluded the case where these boundary components bound a cylinder. Therefore, their angles cannot be all equal to $\pi$. Finally, the sum of the angle defects of a boundary component is an integer multiple of $2\pi$ because otherwise the parallel transport of the translation surface along the loop would define a holonomy map which would fail to be a translation (directions are defined unambiguously in a translation surface). Consequently, for boundary components different from $Y^{1}$, the angle defect is at least $2\pi$. This implies that the total angle defect of $Y^{1}$ is at most $-2\pi$ (like the boundary of a polygon).\newline
We have to distinguish the various parts of $Y^{1}$. One part of it also belongs to $X^{1}$, it is formed by saddle connections in direction $\theta_{C}$.  Another part is formed by saddle connections in direction $\theta_{B}$ that also bound the parallelogram that separates $Y$ from the rest of $X$, see Figure 1. The last part is formed by saddle connections in direction $\theta_{C}$ that belong to other boundary components of $X$. We first focus on the latter. As we proved previously, the total angle defect around one of these boundary components is at least $2\pi$. Such a boundary component is attached to $Y^{1}$ by a unique singularity that that belongs to three corners: two inside $Y$ between saddle connections of direction $\theta_{B}$ and $\theta_{C}$, the last one inside the parallelogram (with an angle of $\pi$). To compute the contribution of this boundary component to the total angle defect of $Y^{1}$,  we have to take into account the fact that one singularity appears in two corners and also that an angle of $\pi$ does not count for $Y^{1}$ (because this corner belongs to the parallelogram). Thus, if the total angular defect of a loop around this boundary component is $2k\pi$, its contribution to $Y^{1}$ is $2(k-1)\pi$. Since we excluded cylinders, we know that $k \geq 1$. Finally, angles $\alpha$ and $\beta$ (corresponding to the two corners between the saddle connections that belong to $X^{1}$ and those that bound the parallelogram) are nonzero. Therefore, we have $\alpha-\pi > -\pi$ and$\beta-\pi> -\pi$. Since there is no negative contribution of corners between saddle connections with the same direction, the total angle defect of $Y^{1}$ is strictly bigger than $(b-2)2\pi$ where $b$ the number of boundary components. This is the contradiction we look for. This ends the proof.
\end{proof}

The upper bound proved in \cite{Au} relies on a proof by induction that directions that belong to the $(k-1)^{th}$ derived set of directions of saddle connections of a translation surface with poles are directions of invariant components whose complex dimension of the deformation space is at least $k$. We directly use the genus as measure of complexity of invariant components. 

\begin{prop}
In a translation surface with poles $(X,\phi)$, every element of $\Theta^{\star (k-1)}$ is the direction of an invariant subsurface of genus at least $\frac{k-2}{2}$. If the genus of the subsurface is exactly $\frac{k-2}{2}$, then its boundary has at least two connected components.
\end{prop}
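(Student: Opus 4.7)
The plan is to proceed by induction on $k$, using Lemma 4.2 to transfer information from a sequence of invariant subsurfaces in directions accumulating to $\theta$ into a single invariant subsurface in direction $\theta$, and using Lemma 4.3 to force the genus to grow along a nested chain.

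The base cases $k = 1$ (vacuous, since $-1/2$ imposes no constraint) and $k = 2$ are handled by Proposition 3.6: accumulation points of $\Theta$ are directions of finite-area cylinders (genus zero with exactly two boundary components) or of minimal components (genus at least one), and both satisfy the statement. For the inductive step, given $\theta \in \Theta^{\star(k-1)}$, I pick a sequence $\theta_n \in \Theta^{\star(k-2)}$ with $\theta_n \neq \theta$ and $\theta_n \to \theta$. By induction each $\theta_n$ is the direction of an invariant subsurface $B_n$ satisfying the statement at level $k-1$. I then invoke Lemma 4.2 on $(B_n)$: the $\omega$-limit $\Omega$ is a union of invariant components in direction $\theta$, and some subsequence $B_{\alpha(n)}$ sits inside $\Omega$. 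I take $C = \Omega$ as the candidate invariant subsurface in direction $\theta$.

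To control the genus of $C$, I unwind the induction for each $\theta_{\alpha(n)}$ to exhibit a chain of $k-1$ nested invariant subsurfaces with distinct consecutive directions, topped by $C$. Iterated application of Lemma 4.3 to overlapping triples in this chain forces the genus to strictly grow every two steps. Combined with the boundary-component clause from the inductive hypothesis, this yields the bound $\frac{k-2}{2}$ on the genus of $C$ together with the boundary-component clause in the equality case. The underlying topological principle is that each step in the chain corresponds to the gluing of at least one pair of pants, and under such a surgery the combination $2g + b$ (where $b$ is the number of boundary components) always increases by exactly one: this is what translates the strict genus growth given by Lemma 4.3 into the precise parity-sensitive bound of the proposition.

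The main obstacle I anticipate is ensuring strict nesting of the chain at each step, since Lemma 4.3 applies only when the inclusions are strict, whereas the $\omega$-limit from Lemma 4.2 only delivers a non-strict inclusion $B_{\alpha(n)} \subset C$; one must rule out (or exploit) the degenerate case of equality as subsets of $X$ with distinct directions. A secondary subtlety is that for odd $k$ the chain argument alone gives a shortfall of one unit of genus relative to $\frac{k-2}{2}$, and the boundary-component clause of the inductive hypothesis is essential to bridge this gap, forcing that whenever the genus fails to increase at one step, the boundary structure is rich enough for genus to increase at the next step.
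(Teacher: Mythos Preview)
Your overall architecture matches the paper's: induction on $k$, Proposition~3.6 for the base case, Lemma~4.2 to pass from a sequence in $\Theta^{\star(k-2)}$ to an $\omega$-limit in direction $\theta\in\Theta^{\star(k-1)}$, and Lemma~4.3 to force genus growth along nested triples $A\subset B\subset C$. The paper also uses only length-three chains (not a full length-$(k-1)$ chain), pulling $A$ from level $k{-}3$, $B$ from level $k{-}2$, and $C$ from level $k{-}1$; your longer-chain formulation is not needed, though it is not wrong.

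Two remarks on the obstacles you flag. First, your ``main obstacle'' about strict nesting is not really an obstacle: Lemma~4.3 requires only $\theta_A\neq\theta_B$ and $\theta_B\neq\theta_C$, not strict inclusion of sets, and this is automatic since the accumulating directions $\theta_n$ are distinct from $\theta$. Second, and more seriously, what you call the ``secondary subtlety'' is in fact the heart of the matter, and your proposal does not resolve it. You assert that each nesting step increases $2g+b$ by exactly one because ``each step corresponds to the gluing of at least one pair of pants.'' This is not proved, and is not obviously true: the complement $C\setminus B$ is a union of flat pieces with boundary made of saddle connections in directions $\theta_B$ and $\theta_C$, and a priori these pieces could be disks, annuli, or higher-complexity surfaces. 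Ruling out the cases that would keep $2g+b$ constant requires a genuine argument.

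The paper does \emph{not} prove your $2g+b$ principle. Instead, it splits the induction into even and odd $k$ and, for $k=2m+2$ in the equality case $g_C=m$, argues directly that $C$ must have at least two boundary components. The argument runs: if $g_C=g_B$ and $C$ had a single boundary component, then so would $B$, and each component of $B\Delta C$ is a genus-zero piece whose boundary corners lie between saddle connections of a single direction (hence have angles in $\pi\mathbb{Z}_{\ge 1}$). An angle-defect (discrete Gauss--Bonnet) count then rules out annular components, and for polygonal components one exhibits an arc joining two $\theta_B$-sides which, closed up through $B$, separates $\partial C$ into at least two pieces, contradicting the single-boundary assumption. This is exactly the content your sketch is missing; without it, the induction does not close for even $k$.
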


\begin{proof}
We proceed by double induction on $k$. Every element of $\Theta^{\star}$ is the direction of an invariant component (Proposition 3.6). If it is a minimal component, its genus is at least one. If it is a cylinder, it has exactly two boundary components so the proposition holds for $k=2$.\newline
Every element of $\Theta^{\star \star}$ is the direction of an invariant subsurface that is the $\omega$-set of a sequence of invariant components (Lemma 4.2). Therefore, we have two invariant subsurfaces $A \subset B$. If the genus of $B$ is at least one, the proposition is proved. If $B$ has genus zero, then $A$ is also a subsurface of genus zero. They are both formed by cylinders. Without loss of generality, we can assume that $A$ is a cylinder. This implies that waist curves of $A$ are also waist curves of $B$. They belong to the same direction which is impossible. Therefore, the proposition holds for $k=3$.\newline

By induction, we assume the proposition holds for $k < 2m+2$. Following Lemma 4.2, every element of $\Theta^{\star (2m+1)}$ is the $\omega$-set of a sequence of invariant components whose directions lie in $\Theta^{\star (2m)}$. For the same reasons, these invariant components are $\omega$-sets of other sequences of invariant components in $\Theta^{\star (2m-1)}$. Consequently, we have three invariant components $A \subset B \subset C$ whose direction is respectively in $\Theta^{\star (2m-1)}$, $\Theta^{\star (2m)}$ and $\Theta^{\star (2m+1)}$. The genus of $A$ is at least $m-1$ and the genus of $B$ is at least $m$. Therefore, the genus of $C$ is at least $m$. If the genus of $C$ is exactly $m$, then we will prove that $C$ has at least two boundary components.\newline
If $C$ has only one boundary component, then $B$ should also have only one boundary component (otherwise, the gluing of $B \Delta C$ would increase the genus by adding a handle). The symmetric difference $B \Delta C$ is formed by connected components of genus zero. If one of these components has several boundary components, then there is a loop inside it such that the complement of this loop has two connected components, one of which contains the boundary of $C$. In this case, $B \Delta C$ is an annulus with two boundary components, each of which is formed by saddle connections in the same directions. Just like in the proof of Lemma 4.3, we compute the total angle defect of this component, where every corner has an angle which is a multiple integer of $\pi$. The total angle defect of an annulus is zero so the two boundary components bound cylinders, which is impossible. Then, we have to deal with the case where the connected components of $B \Delta C$ are polygons. The boundary of any of these polygons is formed by saddle connections in alternating directions ($\theta_{B}$ and $\theta_{C}$) with at least two of each. Drawing a path relating two of these saddle connection in direction $\theta_{B}$ and closing it inside $B$ provides a loop separating the boundary of $C$ into several connected components. Therefore, $C$ cannot have only one boundary component. The proposition holds for $k=2m+2$.\newline

Then, we assume by induction the proposition holds for $k \leq 2m+2$. Following Lemma 4.2, every element of $\Theta^{\star (2m+2)}$ is the $\omega$-set of a sequence of invariant components whose directions lie in $\Theta^{\star (2m+1)}$. Similarly, we have three invariant components $A \subset B \subset C$ whose direction is respectively in $\Theta^{\star (2m)}$, $\Theta^{\star (2m+1)}$ and $\Theta^{\star (2m+2)}$. The genus of both $A$ and $B$ is at least $m$. Therefore, the genus of $C$ is at least $m+1$ (Lemma 4.3). This ends the proof.
\end{proof}

Now we are able to prove the main theorem of the paper: the topological bounds on the descriptive set-theoretic complexity of the set of directions of saddle connections.

\begin{proof}[Proof of Theorem 2.2]
Following Proposition 4.4, every element of $\Theta^{\star (k-1)}$ is the direction of an invariant subsurface of genus $g' \geq \frac{k-1}{2}$. Since $g'$ cannot be bigger than the genus $g$ of the whole surface, $\Theta^{\star (k-1)}$ is empty if $k>2g+1$. Thus, the sequence of derived sets stabilizes and the Cantor-Bendixson rank of $(X,\phi)$ is at most $2g+2$.\newline
If the Cantor-Bendixson rank of a meromorphic differential is exactly $2g+2$, then then $\Theta^{\star (2g+1)}$ is nonempty and there is an invariant subsurface $A$ of genus $g$ with at least two boundary components. The symmetric difference $A \Delta X$ is formed by several genus zero surfaces with exactly one boundary component formed by saddle connections in direction $\theta_{A}$. Since these topological disks are also invariant in this direction and there is no such translation surfaces, they contain poles and at least one conical singularity. Therefore, the meromorphic differential should contain at least two zeroes and two poles.\newline
If the Cantor-Bendixson rank of a translation surface with poles is exactly $2g+1$, then for the same reasons there is an invariant subsurface $A$ of genus $g$. The symmetric difference $A \Delta X$ is formed by at least one topological disk. If the meromorphic differential has only one pole and one zero, then $A \Delta X$ is just a topological disk whose boundary has a total holonomy equal to zero (the residue of the unique pole is zero). Therefore, there should be at least two different conical singularities otherwise there would not be any saddle connection in the boundary of the disk (their period would automatically be zero). Therefore, there are at least three distinct singularities.\newline
As a consequence of the above, if $n=p=1$, then the Cantor-Bendixson rank of the meromorphic differential is at most $2g$.
\end{proof}

In \cite{Au}, Aulicino already constructed families of square-tiled translation surfaces with slits. They are invariant components nested in each other. These examples show that the bound of this paper is sharp for a given genus.\newline

\textit{Acknowledgements.} This research was supported by the Israel Science Foundation (grant No.  1167/17) and has received funding from the European Research Council (ERC) under the European Union Horizon 2020 research and innovation programme (grant agreement No. 802107). The author is grateful to David Aulicino, Dmitry Novikov and the anonymous referee for their valuable remarks.\newline

\nopagebreak
\vskip.5cm
\end{document}